\newtheorem{theorem}{Theorem}[section]
\newtheorem{proposition}[theorem]{Proposition}
\newtheorem{lemma}[theorem]{Lemma}
\newtheorem{claim}{Claim}
\begin{document}

\title[Surface bundles with rank two fundamental groups] 
{Constructions of surface bundles\\ with rank two fundamental groups} 

\author{Kazuhiro Ichihara}
\address{Department of Mathematics, College of Humanities and Sciences, Nihon University, 
3-25-40 Sakurajosui, Setagaya-ku, Tokyo 156-8550, Japan.}
\email{ichihara@math.chs.nihon-u.ac.jp}

\author{Mitsuhiko Takasawa}
\address{Department of Mathematical and Computing Sciences, 
Tokyo Institute of Technology, 
Ookayama 2--12--1, Meguro-ku, Tokyo 152--8552, Japan} 
\email{takasawa@is.titech.ac.jp}

\thanks{The first author is partially supported by
Grant-in-Aid for Young Scientists (B), No.~20740039,
Ministry of Education, Culture, Sports, Science and Technology, Japan.}

\keywords{Heegaard genus, 2-generator group, surface bundle}
\subjclass[2000]{57N10, 57M05, 57M50, 57M25}
\date{\today}

\begin{abstract}
We give a construction 
of hyperbolic $3$-manifolds with rank two fundamental groups 
and report an experimental search to find such manifolds. 
Our manifolds are all surface bundles over the circle 
with genus two surface fiber. 
For the manifolds so obtained, 
we then examine whether they are of Heegaard genus two or not. 
As a byproduct, we give 
an infinite family of knots in the $3$-sphere 
whose knot groups are of rank two. 
\end{abstract}
\maketitle

\section{Introduction}

The \textit{Heegaard genus} and the \textit{rank} of the fundamental group 
are well-known and well-studied complexities of $3$-manifolds 
(see the next section for precise definitions). 

As an extension of the famous Poincar\'{e} conjecture, 
Waldhausen had asked in \cite{Wa} 
whether 
the Heegaard genus of a compact orientable $3$-manifold $M$ 
is equal to 
the rank of its fundamental group $\pi_1(M)$ or not. 
The Poincar\'{e} conjecture states that 
it would be answered affirmatively in the simplest case; 
when $M$ is closed, the rank of $\pi_1(M)$ is zero 
if and only if the Heegaard genus of $M$ is zero. 

In general, the question was negatively answered in \cite{BoZ}. 
In fact, they gave a family of 
closed $3$-manifolds of Heegaard genus three 
with rank two fundamental groups. 
Those $3$-manifolds are Seifert fibered spaces, 
and the family was extended to more wider one in \cite{MS}. 
Also such examples of graph manifolds were obtained in \cite{W}. 
However, as far as the authors know, 
no such examples are known for either hyperbolic $3$-manifolds 
or $3$-manifolds with non-empty boundary \cite[Problem 3.92]{K}. 

In this paper, 
we give systematical and experimental constructions 
of hyperbolic $3$-manifolds with rank two fundamental groups, 
and examine whether they are of Heegaard genus two or not. 

First, we will give a construction of 
surface bundles over the circle 
with genus two fiber 
each of which has the fundamental group of rank two. 
With some exceptions, they will be shown to have 
pseudo-Anosov monodromies, and so, 
they admit hyperbolic structure 
by \cite{Th} (see \cite{O} for a detailed proof). 
Actually, for these manifolds, we will verify that 
all of them are of Heegaard genus two. 

We here remark that, in \cite{Johnson}, 
a characterization of the monodromy maps 
for closed orientable surface bundles 
with genus two Heegaard splittings is obtained. 

As a byproduct, our construction yields 
an infinite family of knots in the $3$-sphere $S^3$ 
whose knot groups are of rank two. 
Precisely we will give an infinite family of 
hyperbolic, genus two, fibered knots in $S^3$ with rank two knot groups. 
Also they all can be checked to have Heegaard genus two; 
that is, they are tunnel number one knots. 
Thus they give supporting evidence 
for the conjecture raised in \cite{Scha1}: 
A knot group has two generators and one relater if and only if 
the knot is tunnel number one. 
See \cite[Problem 1.10]{K}, \cite[Problem 1.73]{K} for related problems 
and \cite{Bl} for a partial solution. 
There are also known examples of knots in $S^3$ 
which are tunnel number one, genus two, fibered knots. 
They are given by two-bridge knots, i.e., knots with bridge index $2$. 
Here a \textit{bridge index} of a knot in $S^3$ is defined as 
the minimal number of local maxima (or local minima) up to ambient isotopy. 
Precisely, the knots are 
$5_1$, $6_2$, $6_3$, $7_6$, $7_7$, $8_{12}$, in the knot table \cite{R}, 
and also are composite knots coming from trefoil and figure-eight knots. 
Actually it is easy to see that two-bridge knots are tunnel number one by definition, 
and it is shown from \cite[Proposition 2]{GK} 
that these are only fibered knots of genus two among two-bridge knots. 
Moreover, in \cite[Corollary 5.4]{jong}, Jong showed that 
they are only genus two fibered knots among alternating knots. 

Second, we report on computer experiments 
for finding examples of surface bundles 
with rank two fundamental groups. 
In fact, we did generate over 100,000 hyperbolic surface bundles 
with rank two fundamental groups by computer, 
however we could not find examples of Heegaard genus more than two.

\section*{Acknowledgments}
The authors would like to thank Yo'av Rieck for 
giving them an idea to construct surface bundles 
with rank two fundamental groups. 
Also they thanks to 
Mikami Hirasawa, Akio Kawauchi and Takuji Nakamura for 
suggestions about Proposition \ref{prop-knots}.

\section{Preliminary}

Throughout the paper, unless otherwise stated, 
all manifolds are assumed to be connected and orientable. 
In this section, we use $F$ to denote a closed surface 
and $M$ a compact $3$-manifold with boundary $\partial M$. 

\subsection{}
A \textit{compression body} $C$ is defined as 
a compact $3$-manifold obtained from the product $F \times [0,1]$ by 
attaching 2-handles on $F \times \{ 1 \}$ and then 
capping off the resulting 2-sphere boundary components by $3$-handles. 
The subsurface of the boundary $\partial C$ 
corresponding to $F \times \{ 0 \}$ is denoted by $\partial_+ C$, and 
then the residual set $\partial C - \partial_+ C$ is denoted by $\partial_- C$. 
Under this setting, a \textit{handlebody} is defined as 
a compression body $C$ with $\partial_- C = \emptyset$. 

By a \textit{Heegaard surface} in $M$, 
we mean an embedded surface $S$ in $M$ 
which separates $M$ into a handlebody $C_1$ and a compression body $C_2$ 
with $\partial M = \partial_- C_2$ and 
$S = \partial_+ C_1 = \partial_+ C_2$. 
Such a splitting of $M$ is called a \textit{Heegaard splitting}. 
It is well-known that 
every compact $3$-manifold admits a Heegaard splitting. 
Thus one can consider the minimal genus of Heegaard surfaces in $M$, 
which is called the \textit{Heegaard genus} of $M$. 

If $M$ admits a Heegaard surface of genus $g$, 
then, by the Van-Kampen's theorem, 
its fundamental group $\pi_1(M)$ admits 
a presentation with $g$ generators. 
Since $M$ admits at least one Heegaard splitting, 
$\pi_1 (M)$ is always finitely generated. 
Among presentations of $\pi_1(M)$, 
one can consider the minimal number of generators, 
which we call the \textit{rank} of the group $\pi_1 (M)$. 
It then follows that 
the rank of $\pi_1(M)$ is less than or equal to 
the genus of any Heegaard surface in $M$. 

As we stated in the previous section, 
we consider the question 
whether the rank of $\pi_1(M)$ is equal to the Heegaard genus of $M$ or not. 
By the observation above, this question is equivalent to that 
whether a presentation of $\pi_1(M)$ with minimal number of generators 
is induced from a Heegaard splitting of $M$. 

\subsection{}
By a \textit{surface bundle}, 
we mean a $3$-manifold which fibers over the circle. 
Any surface bundle with fiber $F$ can be regarded as 
$$ (F \times [0,1] ) / \{ (x,1) = (f(x), 0) \}_{x \in F} $$
with some orientation preserving homeomorphism $f$ of $F$. 
This $f$ is called the \textit{monodromy} of the surface bundle. 
Throughout the paper, 
the surface bundle with monodromy $f$ is denoted by $M_f$. 

Note that if $f$ and $f'$ are isotopic, then 
$M_f$ and $M_{f'}$ are homeomorphic. 
Thus for an element $[f]$ of the mapping class group of $F$, 
the surface bundle $M_f$ is uniquely determined. 
Also note that for any $f$ and its conjugate $gfg^{-1}$ by some $g$, 
$M_f$ and $M_{gfg^{-1}}$ are homeomorphic. 
In fact, it was shown in \cite{Neu} that 
when the rank of the first homology is one, 
a surface bundle $M_f$ is homeomorphic to another $M_{f'}$ 
if and only if 
$f'$ is isotopic to a conjugate of $f$. 

The question which we consider 
was answered affirmatively for torus bundles in \cite{TO}: 
The Heegaard genus of a torus bundle over the circle 
is equal to the rank of the fundamental group. 

\subsection{}
In the following, 
let $F_2$ denote a closed orientable surface of genus two. 
Let $C_1, C_2, \dots, C_5$ 
be the simple closed curves on $F_2$ 
depicted in Figure \ref{fig1}. 
For each $i$, $1 \leq i \leq 5$, 
let $D_i$ denote the Dehn twist along $C_i$. 

\unitlength=0.6mm
\begin{figure}[ht]
 \begin{picture}(100,50)
		\newsavebox{\genus}	
		\savebox{\genus}(12,4) 
		{ 
		\begin{picture}(12,4)
			\qbezier (-6,2)(0,6)(6,2)
			\qbezier (-6,2)(0,-2)(6,2)
		\end{picture} 
		} 
		\newsavebox{\longitude}	
		\savebox{\longitude}(24,18) 
		{ 
		\begin{picture}(24,18)
			\qbezier (0,8)(0,0)(8,0)	
			\qbezier (0,10)(0,18)(8,18)
			\qbezier (16,18)(24,18)(24,10)
			\qbezier (16,0)(24,0)(24,8)
			\multiput(0,8)(24,0){2}{\line(0,1){2}}
			\multiput(8,0)(0,18){2}{\line(1,0){8}}
		\end{picture} 
		} 
  \qbezier (20,5)(10,5)(10,15)		
  \qbezier (10,35)(10,45)(20,45)
  \qbezier (80,45)(90,45)(90,35)
  \qbezier (80,5)(90,5)(90,15)
  \multiput(10,15)(80,0){2}{\line(0,1){20}}
  \multiput(20,5)(0,40){2}{\line(1,0){60}}
  \put  (33,23) {\usebox{\genus} }	
  \put  (67,23) {\usebox{\genus} }
  \qbezier (33,23)(38,14)(33,5)		
  \qbezier [12](33,23)(28,14)(33,5)
  \multiput(20,16)(36,0){2}{\usebox{\longitude}}
  \qbezier (39,25)(50,30)(61,25)		
  \qbezier [12](39,25)(50,20)(61,25)
  \qbezier (67,23)(72,14)(67,5)		
  \qbezier [12](67,23)(62,14)(67,5)
  \put (31,-2){ $C_1$}
  \put (16,34){ $C_2$}
  \put (45.5,29){ $C_3$}
  \put (75,33){ $C_4$}
  \put (65,-2){ $C_5$}
 \end{picture}
 \caption{}\label{fig1}
\end{figure}

By \cite{Li}, every orientation preserving homeomorphism of $F_2$ 
is isotopic to a product of 
a finite number of the Dehn twists $D_1, D_2, D_3, D_4, D_5$ and their inverses. 

\section{Construction of surface bundle}

Our first theorem is the following. 

\begin{theorem}\label{thm1}
Let $n$ be an arbitrary integer and 
$\boldsymbol{\varepsilon}= (\varepsilon_1, \varepsilon_2, \varepsilon_3, \varepsilon_4)$ 
a quadruple of integers $\varepsilon_i$ 
each of which is either $+1$ or $-1$. 
Let $M_{\boldsymbol{\varepsilon},n}$ be the surface bundle over the circle 
with monodromy 
$$f_{\boldsymbol{\varepsilon},n} = {D_2}^{\varepsilon_2} \circ 
{D_1}^{\varepsilon_1} \circ {D_3}^{\varepsilon_3} \circ 
{D_4}^{\varepsilon_4} \circ {D_5}^n .$$
Then the rank of $\pi_1( M_{\boldsymbol{\varepsilon},n} )$ is always two and 
the Heegaard genus of $M_{\boldsymbol{\varepsilon},n}$ is also two. 
\end{theorem}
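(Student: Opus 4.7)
My plan is to prove the two equalities together by exhibiting a genus two Heegaard splitting of $M_{\boldsymbol{\varepsilon},n}$, which forces both the Heegaard genus and the rank of $\pi_1$ to be at most two, and then ruling out rank one via homology.

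For the upper bound, I would exhibit an explicit genus two Heegaard surface. The natural decomposition to exploit is the factorization
$$ f_{\boldsymbol{\varepsilon},n} = (D_2^{\varepsilon_2}\circ D_1^{\varepsilon_1})\circ(D_3^{\varepsilon_3}\circ D_4^{\varepsilon_4}\circ D_5^n). $$
Looking at Figure~\ref{fig1}, the pairs $\{C_1,C_2\}$ and $\{C_4,C_5\}$ bound complete meridian disk systems of the two obvious genus one handlebodies obtained by cutting $F_2$ along $C_3$, so one can try to build a Heegaard surface in $M_{\boldsymbol{\varepsilon},n}$ by cutting along a fiber and amalgamating two handlebody halves, with $D_3^{\varepsilon_3}$ furnishing the coupling across the middle. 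Equivalently, I would invoke Johnson's criterion~\cite{Johnson}, which characterizes monodromies of closed genus two surface bundles admitting a genus two splitting as those lying in a specific double coset of handlebody subgroups of the mapping class group of $F_2$; the task reduces to verifying membership of $f_{\boldsymbol{\varepsilon},n}$ in this double coset via its expression as an ordered product of Dehn twists along the curves in Figure~\ref{fig1}.

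For the lower bound on the rank, I would compute $H_1(M_{\boldsymbol{\varepsilon},n})$ from the action of $f_{\boldsymbol{\varepsilon},n}$ on $H_1(F_2)\cong\mathbf{Z}^4$. Each Dehn twist acts by a symplectic transvection $x\mapsto x+\langle x,c_i\rangle\, c_i$ with $c_i=[C_i]$, so the matrix $f_{\boldsymbol{\varepsilon},n\,*}$ is a product of five explicit elementary symplectic matrices. The bundle gives $H_1(M_{\boldsymbol{\varepsilon},n})\cong \mathbf{Z}\oplus\operatorname{coker}(\mathrm{id}-f_{\boldsymbol{\varepsilon},n\,*})$, and one checks that the cokernel is nontrivial, so $H_1$ has rank at least two and $\pi_1$ cannot be cyclic.

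The main obstacle is constructing the Heegaard splitting. A direct construction requires tracking a complete meridian disk system through the composition of five Dehn twists and verifying it is still complete after the monodromy is applied; applying Johnson's criterion trades this for a subgroup-membership computation in $\operatorname{Mod}(F_2)$. Either route must cover the sixteen sign patterns $\boldsymbol{\varepsilon}$ and all integers $n$ uniformly, and I would lean on the visible symmetry of Figure~\ref{fig1} swapping $(C_1,C_2)$ with $(C_5,C_4)$ to halve the casework, together with the fact that all powers of a single Dehn twist $D_5^n$ contribute identically to the handle-slide analysis.
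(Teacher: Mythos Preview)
Your outline has two concrete gaps, and the paper's route is different from either of your proposed ones.

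The homology lower bound fails as stated. For $n=0$ and every $\boldsymbol{\varepsilon}$, the manifold $M_{\boldsymbol{\varepsilon},0}$ is the $0$-surgery on a two-bridge knot in $S^3$ (as the paper notes just after the theorem), so $H_1(M_{\boldsymbol{\varepsilon},0})\cong\mathbf{Z}$ and $\operatorname{coker}(\mathrm{id}-f_{\boldsymbol{\varepsilon},0\,*})$ is trivial; more generally Table~\ref{Table1} shows $\beta_1=1$ for most of these bundles. Thus $H_1$ alone does not rule out cyclic $\pi_1$. The fix is easy---$\pi_1(F_2)$ sits inside $\pi_1(M_{\boldsymbol{\varepsilon},n})$ as the fiber subgroup, so the group is non-abelian---and this is exactly the argument the paper uses.

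The upper bound sketch also has a wrong step. In Figure~\ref{fig1} the curves $C_1$ and $C_2$ meet once (as do $C_4$ and $C_5$), so $\{C_1,C_2\}$ cannot bound a meridian disk system of any handlebody; what is true is that $C_1,C_3,C_5$ bound disks on one side of $F_2\subset S^3$ and $C_2,C_3,C_4$ on the other. But $f_{\boldsymbol{\varepsilon},n}$ does not factor as a product of an inside-extending and an outside-extending map in the given order, since $D_3$ is sandwiched between $D_1$ and $D_4$ and commutes with neither $D_2$ nor $D_4$. So neither the direct construction nor Johnson's double-coset criterion goes through without a nontrivial rewriting you have not supplied. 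The paper instead proves rank two purely algebraically: in the semidirect-product presentation with stable letter $t$, a direct computation shows $\Phi_{\boldsymbol{\varepsilon},n}(a_i)$ is a word in $a_1,\dots,a_{i+1}$ containing $a_{i+1}^{\pm1}$ exactly once, so the relations $t^{-1}a_it=\Phi_{\boldsymbol{\varepsilon},n}(a_i)$ successively eliminate $a_2,a_3,a_4$ in favor of $a_1$ and $t$. For the Heegaard genus the paper builds an explicit surgery description of $M_{\boldsymbol{\varepsilon},n}$ on a link in $S^3$, simplifies it to a two-component link, and exhibits an unknotting tunnel in the picture.
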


Of course, the second assertion implies the first assertion, 
but we will directly prove the first assertion 
without assuming the second assertion. 

In fact, for the case of $n=0$, 
the manifold $M_{\boldsymbol{\varepsilon},0}$ 
is that obtained from a $2$-bridge knot in $S^3$ by $0$-surgery. 
Thus it naturally admits a Heegaard surface of genus two and 
so the theorem follows immediately  in this case. 
Our manifolds can be regarded as 
an extension of the class of such manifolds. 

The first assertion of the theorem follows from the next lemma. 

\begin{lemma}\label{lem1}
With the same notations as in Theorem $\ref{thm1}$, 
the rank of $\pi_1( M_{\boldsymbol{\varepsilon},n} )$ is always two. 
\end{lemma}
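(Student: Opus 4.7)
The plan is to show $\pi_1(M_{\boldsymbol{\varepsilon},n}) = \langle t, \alpha_5\rangle$, where $t$ is the stable letter of the HNN extension associated to the fibration and $\alpha_5 \in \pi_1(F_2)$ is the loop represented by $C_5$. Let $\alpha_i \in \pi_1(F_2)$ denote the loop associated to $C_i$. The curves $C_1,\dots,C_5$ form a chain: consecutive ones meet in exactly one point, non-consecutive ones are disjoint. Hence each Dehn twist $D_i$ fixes $\alpha_j$ whenever $|i-j|\ne 1$, and sends a neighbouring $\alpha_j$ to a word of the form $\alpha_j \alpha_i^{\pm 1}$. Using the fibration we get
\[
\pi_1(M_{\boldsymbol{\varepsilon},n}) = \left\langle \pi_1(F_2), t \ \middle| \ t x t^{-1} = (f_{\boldsymbol{\varepsilon},n})_*(x)\ \text{for all}\ x \in \pi_1(F_2) \right\rangle,
\]
and the lower bound rank $\geq 2$ is immediate since $\pi_1(F_2)$ sits inside as a non-abelian subgroup.

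For the upper bound, I would use the inverse monodromy $f_{\boldsymbol{\varepsilon},n}^{-1} = D_5^{-n} D_4^{-\varepsilon_4} D_3^{-\varepsilon_3} D_1^{-\varepsilon_1} D_2^{-\varepsilon_2}$ and peel off the loops $\alpha_i$ one by one, moving along the chain from $\alpha_5$ toward $\alpha_1$. Direct computation with the Dehn twist formulas shows that $(f_{\boldsymbol{\varepsilon},n})_*^{-1}(\alpha_5)$ is a word in $\alpha_4,\alpha_5$ with $\alpha_4^{\pm 1}$ appearing linearly, so the conjugation relation $t^{-1}\alpha_5 t = (f_{\boldsymbol{\varepsilon},n})_*^{-1}(\alpha_5)$ expresses $\alpha_4$ in $\langle \alpha_5, t\rangle$. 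The same kind of analysis applied to $(f_{\boldsymbol{\varepsilon},n})_*^{-1}(\alpha_4)$ yields $\alpha_3 \in \langle \alpha_5, t\rangle$, and then $(f_{\boldsymbol{\varepsilon},n})_*^{-1}(\alpha_3)$ and $(f_{\boldsymbol{\varepsilon},n})_*^{-1}(\alpha_2)$ together supply enough linear equations to pin down $\alpha_2$ and $\alpha_1$. Since $\alpha_1,\alpha_2,\alpha_4,\alpha_5$ generate $\pi_1(F_2)$ (the curves $C_1,C_2,C_4,C_5$ form a cut system for $F_2$), we conclude $\pi_1(M_{\boldsymbol{\varepsilon},n}) = \langle \alpha_5, t\rangle$, whence the rank is at most $2$.

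The main obstacle is purely computational: one must verify the ``linear appearance'' of each next-neighbour generator uniformly in all sign choices $\varepsilon_j \in \{\pm 1\}$ and in all integer values of $n$, and carry out the final elimination of $\alpha_1$ and $\alpha_2$ cleanly. The structural reason the argument works is that each of $D_1, D_2, D_3, D_4$ appears exactly once in the monodromy, which ensures that at each peel-off step the relevant new generator appears with exponent $\pm 1$ and can be isolated. Only $D_5$ is raised to the variable power $n$, but its contribution is confined to the single factor $D_5^{-n}(\alpha_4) = \alpha_4 \alpha_5^{\mp n}$, which is harmless since $\alpha_5$ is already the seed; no difficulty arises for any $n$, including the degenerate case $n = 0$.
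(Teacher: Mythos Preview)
Your approach is the same as the paper's in spirit: write $\pi_1(M_{\boldsymbol\varepsilon,n})$ as an HNN extension of $\pi_1(F_2)$ with stable letter $t$ and eliminate the surface generators one by one using the fact that each Dehn twist in the monodromy occurs to the power $\pm 1$. The paper, however, runs the elimination from the \emph{other} end of the chain: it uses the standard basis $a_1,a_2,a_3,a_4$ (essentially your $\alpha_1,\alpha_2,\alpha_4,\alpha_5$) and the \emph{forward} map $\Phi$, obtaining a strictly triangular system $\Phi(a_1)=a_1a_2^{-\varepsilon_2}$, then $\Phi(a_2)$ introduces a single $a_3^{\pm1}$, then $\Phi(a_3)$ a single $a_4^{\pm1}$, so that $\langle a_1,t\rangle=\pi_1(M_{\boldsymbol\varepsilon,n})$ drops out immediately.

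Your direction is slightly less clean precisely because of the order $D_2D_1$ at the left end of $f$: when you apply $f^{-1}=D_5^{-n}D_4^{-\varepsilon_4}D_3^{-\varepsilon_3}D_1^{-\varepsilon_1}D_2^{-\varepsilon_2}$ to $\alpha_3$, the first two twists $D_2^{-\varepsilon_2}$ then $D_1^{-\varepsilon_1}$ introduce \emph{both} $\alpha_2$ and $\alpha_1$, so the system is not triangular there. Your phrase ``enough linear equations'' is not an argument in a non-abelian group; what actually makes the last step go through is the concrete identity
\[
f^{-1}_*(\alpha_3)=h\cdot\bigl(f^{-1}_*(\alpha_2)\bigr)^{\varepsilon_2},\qquad h\in\langle\alpha_3,\alpha_4,\alpha_5\rangle,
\]
which one sees by writing out both images explicitly. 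Conjugating by $t$ gives $\alpha_3=(t^{-1}ht)\,\alpha_2^{\varepsilon_2}$, so $\alpha_2\in\langle\alpha_5,t\rangle$, and then $\alpha_1$ follows from $f^{-1}_*(\alpha_2)=h^{-\varepsilon_3}\alpha_2\alpha_1^{-\varepsilon_1}$. So your outline is salvageable, but you should replace the ``linear equations'' heuristic with this explicit check. (Minor point: $C_1,C_2,C_4,C_5$ are not a cut system---consecutive pairs meet---but they do give a standard generating set for $\pi_1(F_2)$, which is all you need.)
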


\begin{proof}
Consider the oriented loops $a_1, a_2, a_3, a_4$ on $F_2$ 
illustrated in Figure \ref{fig2}. 

\unitlength=0.6mm
\begin{figure}[hbt]
 \begin{picture}(100,50)
		\newsavebox{\gen}	
		\savebox{\gen}(12,4) 
		{ 
		\begin{picture}(12,4)
			\qbezier (-6,2)(0,6)(6,2)
			\qbezier (-6,2)(0,-2)(6,2)
		\end{picture} 
		} 
  \qbezier (20,5)(10,5)(10,15)		
  \qbezier (10,35)(10,45)(20,45)
  \qbezier (80,45)(90,45)(90,35)
  \qbezier (80,5)(90,5)(90,15)
  \multiput(10,15)(80,0){2}{\line(0,1){20}}
  \multiput(20,5)(0,40){2}{\line(1,0){60}}
  \put  (33,23) {\usebox{\gen}}	
  \put  (67,23) {\usebox{\gen}}
  \put  (51.3,15) {\circle* {2.5}}		
  \put (34,23){ \vector(2,-1){7} }	
  \put (41,19.5){ \line(2,-1){8} }
  \qbezier [10](35,5.5)(31.5,14)(35,23)
  \qbezier (35,5)(43.5,10)(52,15)
  \put (33.5,1){$a_1$}
  \put (50.5, 16){\vector(-1,4){3}}	
  \qbezier (47.5,28)(43.5,38)(32,37.5)
  \put (32,37.5){\line(-1,0){3}}
  \qbezier (29,37.5)(20,37.5)(20,28.5)
  \put (20,28.5){\line(0,-1){7}}
  \qbezier (20,21.5)(20,12.5)(29,12.5)
  \qbezier (29,12.5)(40,13)(50,15)
  \put (18,37.5){$a_2$}
  \put (65.5,23){ \vector(-2,-1){7} }	
  \put (58.5,19.5){ \line(-2,-1){8} }
  \qbezier [10](67,5.05)(70,14)(67,23.02)
  \qbezier (67,5)(58,10)(51,15)
  \put (66,1){$a_3$}
  \put (51.5, 16){\vector(1,4){3}}	
  \qbezier (54.5,28)(58.5,38)(69,37.5)
  \put (69,37.5){\line(1,0){2}}
  \qbezier (71,37.5)(80,37.5)(80,28.5)
  \put (80,28.5){\line(0,-1){7}}
  \qbezier (80,21.5)(80,12.5)(71,12.5)
  \qbezier (71,12.5)(60,13)(52,15)
  \put (78,37){$a_4$}
  \end{picture}
 \caption{}\label{fig2}
\end{figure}

We abuse the notations $a_1, a_2, a_3, a_4$ to denote 
the elements of the fundamental group $\pi_1 (F_2)$ 
represented by the corresponding loops. 

Let $\Phi_{\boldsymbol{\varepsilon},n}$ be the isomorphism of $\pi_1 (F_2)$ 
induced by $f_{\boldsymbol{\varepsilon},n}$. 
Then it is well-known that $\pi_1( M_{\boldsymbol{\varepsilon},n} )$ decomposes as 
a semidirect product of $\pi_1 (F_2)$ and $\mathbb{Z}$. 
Precisely, 
$\pi_1( M_{\boldsymbol{\varepsilon},n} )$ has the following presentation: 
\begin{equation}\label{prn1}
\Big\langle 
a_1, a_2, a_3, a_4, t \ \Big\vert \ t^{-1} a_i t = \Phi_{\boldsymbol{\varepsilon},n} (a_i), 
\ (1 \leq i \leq 4), \ [a_1, a_2] = [a_3, a_4] 
\Big\rangle ,
\end{equation}
where $t$ denotes a generator of the infinite cyclic factor. 

\begin{claim}
For each $i \in \{ 1,2,3 \}$, 
the image of $a_i$ under $\Phi_{\boldsymbol{\varepsilon},n}$ is represented by a word 
which contains just one $a_{i+1}$ (or $(a_{i+1})^{-1}$) and no letters $a_j$ $(j > i+1)$. 
\end{claim}

\begin{proof}
We first describe 
the isomorphism $\Phi_{\boldsymbol{\varepsilon},n}: \pi_1 (F_2) \to \pi_1 (F_2)$ in detail. 
Let $\Delta_i$ denote the isomorphism of $\pi_1 (F_2)$ 
induced by the Dehn twists $D_i$ for $1 \leq i \leq 4$. 
Then 
$$\Phi_{\boldsymbol{\varepsilon},n} = 
{\Delta_2}^{\varepsilon_2} \circ 
{\Delta_1}^{\varepsilon_1} \circ {\Delta_3}^{\varepsilon_3} \circ 
{\Delta_4}^{\varepsilon_4} \circ {\Delta_5}^n $$
holds. 

By Figure \ref{fig1} and \ref{fig2}, 
the actions of $\Delta_i$'s on $a_j$'s are described as follows. 
$$
\Big( {\Delta_i}^{\varepsilon}(a_j) \Big)_{1 \leq i \leq 5, 1 \leq j \leq 4}
= 
\left(
\begin{array}{cccc}
a_1 &	a_2 (a_1)^{\varepsilon}&	a_3 &		a_4 \\
a_1 (a_2)^{-\varepsilon}& 	a_2 &	a_3 &	a_4 \\
a_1 &	((a_3)^{-1} a_1)^{\varepsilon} a_2 &	a_3 &	
((a_3)^{-1} a_1)^{\varepsilon} a_4 \\
a_1 &		a_2 &		a_3 (a_4)^{\varepsilon} &	a_4 \\
a_1 &		a_2 &		a_3 &		a_4 (a_3)^{-\varepsilon} \\
\end{array}
\right),
$$
where $\varepsilon=\pm 1$. 

For $a_1$, this implies that 
$$
\Phi_{\boldsymbol{\varepsilon},n} (a_1) = 
{\Delta_2}^{\varepsilon_2} \circ 
{\Delta_1}^{\varepsilon_1} \circ {\Delta_3}^{\varepsilon_3} \circ 
{\Delta_4}^{\varepsilon_4} \circ {\Delta_5}^n (a_1) 
=  {\Delta_2}^{\varepsilon_2} (a_1) = 
a_1 (a_2)^{-\varepsilon_2}. 
$$
Thus the claim holds for the case that $i=1$. 

For $a_2$, we have 
\begin{eqnarray*}
\Phi_{\boldsymbol{\varepsilon},n} (a_2) 
&=&
{\Delta_2}^{\varepsilon_2} \circ 
{\Delta_1}^{\varepsilon_1} \circ {\Delta_3}^{\varepsilon_3} \circ 
{\Delta_4}^{\varepsilon_4} \circ {\Delta_5}^n (a_2) 
= {\Delta_2}^{\varepsilon_2} \circ {\Delta_1}^{\varepsilon_1} \circ {\Delta_3}^{\varepsilon_3} (a_2) \\
&=& {\Delta_2}^{\varepsilon_2} \circ {\Delta_1}^{\varepsilon_1} ( ((a_3)^{-1} a_1)^{\varepsilon_3} a_2 ). 
\end{eqnarray*}
From the matrix above, note that $a_3$ is invariant 
under the isomorphisms $(\Delta_1)^{\pm 1}$, $(\Delta_2)^{\pm 1}$ and 
the image of $a_1$, $a_2$ under $(\Delta_1)^{\pm 1}$, $(\Delta_2)^{\pm 1}$ are 
represented by the word with letters $a_1$, $a_2$. 
Thus $\Phi_{\boldsymbol{\varepsilon},n} (a_2) 
={\Delta_2}^{\varepsilon_2} \circ {\Delta_1}^{\varepsilon_1} 
( ((a_3)^{-1} a_1)^{\varepsilon} a_2 )$ is represented by the word 
with a single letter $a_3$ and the letters $a_1$, $a_2$. 

Finally for $a_3$, we have 
\begin{eqnarray*}
\Phi_{\boldsymbol{\varepsilon},n} (a_3) 
&=&
{\Delta_2}^{\varepsilon_2} \circ 
{\Delta_1}^{\varepsilon_1} \circ {\Delta_3}^{\varepsilon_3} \circ 
{\Delta_4}^{\varepsilon_4} \circ {\Delta_5}^n (a_3) \\
&=& {\Delta_2}^{\varepsilon_2} \circ {\Delta_1}^{\varepsilon_1} \circ 
{\Delta_3}^{\varepsilon_3} \circ {\Delta_4}^{\varepsilon_4} (a_3) \\
&=& {\Delta_2}^{\varepsilon_2} \circ {\Delta_1}^{\varepsilon_1} 
( a_3 (((a_3)^{-1} a_1)^{\varepsilon_3} a_4 )^{\varepsilon_4} ). 
\end{eqnarray*}
Again, from the matrix above, note that $a_4$ is invariant 
under the isomorphisms $(\Delta_1)^{\pm 1}$, $(\Delta_2)^{\pm 1}$ and 
the image of $a_1$, $a_2$, $a_3$ 
under $(\Delta_1)^{\pm 1}$, $(\Delta_2)^{\pm 1}$ are 
represented by the word with letters $a_1$, $a_2$, $a_3$. 
Thus 
$$\Phi_{\boldsymbol{\varepsilon},n} (a_3)
={\Delta_2}^{\varepsilon_2} \circ {\Delta_1}^{\varepsilon_1} 
( a_3 (((a_3)^{-1} a_1)^{\varepsilon_3} a_4 )^{\varepsilon_4} )$$ 
is represented by the word 
with single letter $a_4$ and the letters $a_1$ , $a_2$ and $a_3$. 
\end{proof}

The claim enables us to reduce 
the number of the generators of the presentation (\ref{prn1}), 
and we see that $\pi_1( M_{\boldsymbol{\varepsilon},n} )$ is generated 
by $a_1$ and $t$ only. 
This completes the proof of the lemma. 
\end{proof}

Next we show that the Heegaard genus of $M_{\boldsymbol{\varepsilon},n}$ is also two 
independently from Lemma \ref{lem1}. 

\begin{lemma}\label{lem2}
With the same notations as in Theorem $\ref{thm1}$, 
every $M_{\boldsymbol{\varepsilon},n}$ is of Heegaard genus two. 
\end{lemma}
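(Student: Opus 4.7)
Since Lemma \ref{lem1} already gives the lower bound that the Heegaard genus of $M_{\boldsymbol{\varepsilon},n}$ is at least two, the task is to exhibit an explicit genus-two Heegaard splitting.

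The plan is to first dispose of the case $n=0$ and then reduce the general case to it via Dehn surgery. For $n=0$, the monodromy $f_{\boldsymbol{\varepsilon},0}$ involves only Dehn twists along the sub-chain $C_1, C_2, C_3, C_4$, and the remark following Theorem \ref{thm1} identifies $M_{\boldsymbol{\varepsilon},0}$ with the $0$-surgery on a fibered $2$-bridge knot $K \subset S^3$. Since $2$-bridge knots have tunnel number one, $E(K) = S^3 \setminus \operatorname{int} N(K)$ admits a genus-two Heegaard splitting; attaching a $0$-framed $2$-handle together with a capping $3$-handle yields a genus-two Heegaard splitting of $M_{\boldsymbol{\varepsilon},0}$.

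For general $n$, I would use the standard principle that composing the monodromy of a surface bundle with $D_\gamma^n$ along a fiber curve $\gamma$ corresponds to $(-1/n)$-Dehn surgery on the knot $\gamma \times \{\mathrm{pt}\}$ in the bundle. Thus $M_{\boldsymbol{\varepsilon},n}$ is obtained from $M_{\boldsymbol{\varepsilon},0}$ by such a surgery along a copy of $C_5$ in a fiber. I would then arrange the genus-two Heegaard surface of $M_{\boldsymbol{\varepsilon},0}$ from the previous step so that this copy of $C_5$ lies on it, with the surgery framing corresponding to a slope on the Heegaard surface; the Dehn surgery then induces a genus-two Heegaard splitting of $M_{\boldsymbol{\varepsilon},n}$.

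The main obstacle will be the geometric compatibility between the curve $C_5$ and the genus-two Heegaard surface of $M_{\boldsymbol{\varepsilon},0}$: one must verify both that $C_5$ is isotopic onto this Heegaard surface and that the $(-1/n)$-surgery slope is realized by a simple closed curve on the surface. An alternative route, bypassing this verification, is to invoke Johnson's characterization \cite{Johnson} of monodromies realizing genus-two Heegaard splittings on closed genus-two surface bundles, and to check that $f_{\boldsymbol{\varepsilon},n}$ satisfies its hypothesis.
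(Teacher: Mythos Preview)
Your strategy is sound and in fact parallels the paper's, but the step you yourself flag as ``the main obstacle'' is genuinely the heart of the matter and you have not carried it out. Two remarks on the setup first: the lower bound via Lemma~\ref{lem1} is fine (the paper instead argues independently of Lemma~\ref{lem1}, noting that $\pi_1(M_{\boldsymbol{\varepsilon},n})$ contains a nonabelian surface group and hence $M_{\boldsymbol{\varepsilon},n}$ cannot be $S^3$, $S^2\times S^1$, or a lens space), and the $n=0$ case via $0$-surgery on a tunnel-number-one $2$-bridge knot is exactly the observation recorded after Theorem~\ref{thm1}. For general $n$, however, what you actually need is not merely that the copy of $C_5$ lie on the genus-two Heegaard surface with some prescribed framing, but that it be a \emph{core} of one of the two handlebodies (equivalently, that it be isotopic onto the surface so as to bound a compressing disk on one side). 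Only then does arbitrary Dehn surgery on it preserve the genus-two splitting. This does not follow automatically from the $2$-bridge description, and you give no argument for it.

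The paper resolves this point by working uniformly in $n$ from the outset rather than bootstrapping from $M_{\boldsymbol{\varepsilon},0}$. It realizes $M_{\boldsymbol{\varepsilon},n}$ as surgery on an explicit $12$-component link in $S^3$ encoding the product bundle together with the five Dehn twists, then uses Kirby calculus to reduce this to a two-component link, one component carrying coefficient $0$ and the other (the avatar of $C_5$) carrying coefficient $1/n$. A tunnel arc on the $0$-component is exhibited, and one checks from the diagram that the resulting genus-two surface separates the two-component link exterior into two compression bodies, each with a single torus in $\partial_-$. Filling both tori (with slopes $0$ and $1/n$) then yields a genus-two Heegaard splitting of $M_{\boldsymbol{\varepsilon},n}$ for every $n$ at once. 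In effect the paper carries out exactly the verification you are missing, but in the surgery picture in $S^3$ rather than inside $M_{\boldsymbol{\varepsilon},0}$. Your proposed fallback via \cite{Johnson} would likewise require a concrete check that $f_{\boldsymbol{\varepsilon},n}$ meets his hypothesis, which you have not supplied; so as written neither route in your proposal is complete.
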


\begin{proof}
To prove the lemma, 
we first create a surgery description of $M_{\boldsymbol{\varepsilon},n}$. 

Let $L$ be the link defined as follows. 
Prepare five copies of annulus embedded in $S^3$ and plumb them. 
Denote by $S$ the surface so obtained. 
The boundary of $S$ gives a two component link $l_0 \cup l'_0$ 
in $S^3$. 
Add five trivial components $l_1, l_2, l_3, l_4, l_5$ 
piercing each plumbed annulus. 
Add more five trivial components $l'_1, l'_2, l'_3, l'_4, l'_5$ 
such that they correspond to the cores of plumbed annuli 
and $l'_k$ is isotopic to the meridian of $l_k$ ($1 \leq k \leq 5$) 
in the exterior of 
the link $l_0 \cup l'_0 \cup l_1 \cup l_2 \cup l_3 \cup l_4 \cup l_5$. 
See Figure \ref{fig3}. 

\unitlength=0.9mm
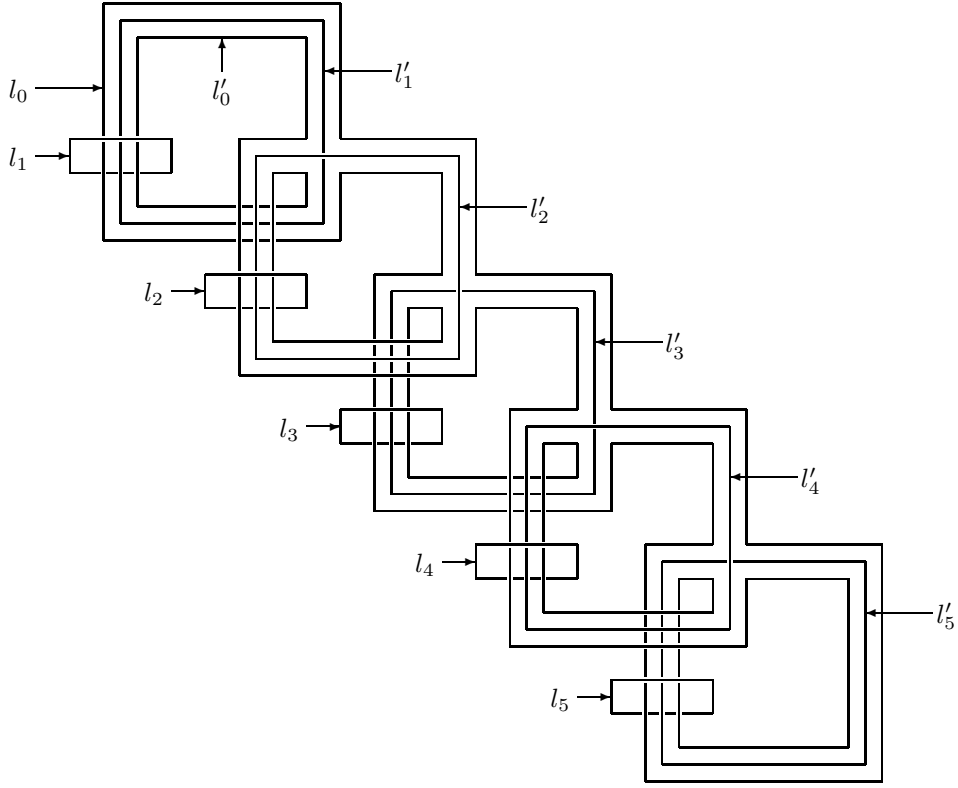
\begin{figure}[ht]
 \begin{picture}(120,120)
	\linethickness{0.2mm}
		\newsavebox{\pierce}
		\savebox{\pierce}(15,5) 
		{ 
		\begin{picture}(15,5)
			\put (0,5){\line(1,0){15}}
			\put (0,5){\line(0,-1){5}}
			\put (15,5){\line(0,-1){5}}
			\put (0,0){\line(1,0){4.5}}
			\put (5.5,0){\line(1,0){1.5}}
			\put (8,0){\line(1,0){1.5}}
			\put (10.5,0){\line(1,0){4.5}}
		\end{picture} 
		} 
		\newsavebox{\HopfA}	
		\savebox{\HopfA}(35,35) 
		{ 
		\begin{picture}(35,35)
			\put (0,35){\line(1,0){35}}		
			\put (35,35){\line(0,-1){20}}
			\put (35,10){\line(0,-1){10}}
			\put (35,0){\line(-1,0){9.5}}
			\put (19.5,0){\line(-1,0){19.5}}
			\put (0,0){\line(0,1){14.5}}
			\put (0,15.5){\line(0,1){19.5}}
			\put (5,30){\line(1,0){25}}		
			\put (30,30){\line(0,-1){15}}
			\put (30,10){\line(0,-1){5}}
			\put (30,5){\line(-1,0){4.5}}
			\put (19.5,5){\line(-1,0){14.5}}
			\put (5,5){\line(0,1){9.5}}
			\put (5,15.5){\line(0,1){14.5}}
			\put (2.5,32.5){\line(1,0){30}}	
			\put (32.5,32.5){\line(0,-1){19.5}}
			\put (32.5,12){\line(0,-1){9.5}}
			\put (32.5,2.5){\line(-1,0){7}}
			\put (19.5,2.5){\line(-1,0){17}}
			\put (2.5,2.5){\line(0,1){12}}
			\put (2.5,15.5){\line(0,1){17}}
			\multiput(24.5,0)(0,2.5){3}{\line(-1,0){1.5}}
			\multiput(22,0)(0,2.5){3}{\line(-1,0){1.5}}
		\end{picture} 
		} 
		\newsavebox{\HopfC}	
		\savebox{\HopfC}(35,35) 
		{ 
		\begin{picture}(35,35)
			\put (0,35){\line(1,0){10}}		
			\put (15,35){\line(1,0){20}}
			\put (35,35){\line(0,-1){20}}
			\put (35,10){\line(0,-1){10}}
			\put (35,0){\line(-1,0){9.5}}
			\put (19.5,0){\line(-1,0){19.5}}
			\put (0,0){\line(0,1){14.5}}
			\put (0,15.5){\line(0,1){4}}
			\put (0,25.5){\line(0,1){9.5}}
			\put (5,30){\line(1,0){5}}		
			\put (15,30){\line(1,0){15}}
			\put (30,30){\line(0,-1){15}}
			\put (30,10){\line(0,-1){5}}
			\put (30,5){\line(-1,0){4.5}}
			\put (19.5,5){\line(-1,0){14.5}}
			\put (5,5){\line(0,1){9.5}}
			\put (5,15.5){\line(0,1){4}}
			\put (5,25.5){\line(0,1){4.5}}
			\put (2.5,32.5){\line(1,0){9.5}}	
			\put (13,32.5){\line(1,0){19.5}}
			\put (32.5,32.5){\line(0,-1){19.5}}
			\put (32.5,12){\line(0,-1){9.5}}
			\put (32.5,2.5){\line(-1,0){7}}
			\put (19.5,2.5){\line(-1,0){17}}
			\put (2.5,2.5){\line(0,1){12}}
			\put (2.5,15.5){\line(0,1){4}}
			\put (2.5,25.5){\line(0,1){7}}
			\multiput(24.5,0)(0,2.5){3}{\line(-1,0){1.5}}
			\multiput(22,0)(0,2.5){3}{\line(-1,0){1.5}}
			\multiput(0,20.5)(2.5,0){3}{\line(0,1){1.5}}
			\multiput(0,23)(2.5,0){3}{\line(0,1){1.5}}
		\end{picture} 
		} 
		\newsavebox{\HopfE}	
		\savebox{\HopfE}(35,35) 
		{ 
		\begin{picture}(35,35)
			\put (0,35){\line(1,0){10}}		
			\put (15,35){\line(1,0){20}}
			\put (35,35){\line(0,-1){35}}
			\put (35,0){\line(-1,0){35}}
			\put (0,0){\line(0,1){14.5}}
			\put (0,15.5){\line(0,1){4}}
			\put (0,25.5){\line(0,1){9.5}}
			\put (5,30){\line(1,0){5}}		
			\put (15,30){\line(1,0){15}}
			\put (30,30){\line(0,-1){25}}
			\put (30,5){\line(-1,0){25}}
			\put (5,5){\line(0,1){9.5}}
			\put (5,15.5){\line(0,1){4}}
			\put (5,25.5){\line(0,1){4.5}}
			\put (2.5,32.5){\line(1,0){9.5}}	
			\put (13,32.5){\line(1,0){19.5}}
			\put (32.5,32.5){\line(0,-1){30}}
			\put (32.5,2.5){\line(-1,0){30}}
			\put (2.5,2.5){\line(0,1){12}}
			\put (2.5,15.5){\line(0,1){4}}
			\put (2.5,25.5){\line(0,1){7}}
			\multiput(0,20.5)(2.5,0){3}{\line(0,1){1.5}}
			\multiput(0,23)(2.5,0){3}{\line(0,1){1.5}}
		\end{picture} 
		} 
		\newsavebox{\overHopf}	
		\savebox{\overHopf}(35,35) 
		{ 
		\begin{picture}(35,35)
			\put (0,35){\line(1,0){10}}		
			\put (15,35){\line(1,0){20}}
			\put (35,35){\line(0,-1){20}}
			\put (35,10){\line(0,-1){10}}
			\put (35,0){\line(-1,0){35}}
			\put (0,0){\line(0,1){14.5}}
			\put (0,15.5){\line(0,1){19.5}}
			\put (5,30){\line(1,0){5}}		
			\put (15,30){\line(1,0){15}}
			\put (30,30){\line(0,-1){15}}
			\put (30,10){\line(0,-1){5}}
			\put (30,5){\line(-1,0){25}}
			\put (5,5){\line(0,1){9.5}}
			\put (5,15.5){\line(0,1){14.5}}
			\put (2.5,32.5){\line(1,0){30}}	
			\put (32.5,32.5){\line(0,-1){30}}
			\put (32.5,2.5){\line(-1,0){30}}
			\put (2.5,2.5){\line(0,1){12}}
			\put (2.5,15.5){\line(0,1){17}}
		\end{picture} 
		} 
	\multiput(0,95)(20,-20){5}{\usebox{\pierce}}  
	\put (5,85){\usebox{\HopfA}}
	\put (45,45){\usebox{\HopfC}}
	\put (85,5){\usebox{\HopfE}}
	\multiput(25,65)(40,-40){2}{\usebox{\overHopf}}  
	\linethickness{0.1mm}
	\put (-5, 107.5){\vector(1,0){10}}
	\put (-9,106){$l_0$}
	\put (22.5,110){\vector(0,1){5}}
	\put (21,106){$l'_0$}
	\multiput(-5, 97.5)(20,-20){5}{\vector(1,0){5}}
	\put (-9,96){$l_1$}
	\put (11,76){$l_2$}
	\put (31,56){$l_3$}
	\put (51,36){$l_4$}
	\put (71,16){$l_5$}
	\multiput(47.5, 110)(20,-20){5}{\vector(-1,0){10}}
	\put (48,108.5){$l'_1$}
	\put (68,88.5){$l'_2$}
	\put (88,68.5){$l'_3$}
	\put (108,48.5){$l'_4$}
	\put (128,28.5){$l'_5$}
 \end{picture}
 \caption{The link $L$}\label{fig3}
\end{figure}

Let 
$L(r_0, r'_0, r_1, r_2, r_3, r_4, r_5, r'_1, r'_2, r'_3, r'_4, r'_5)$ 
denote the $3$-manifold obtained by Dehn surgery 
along the link $L$ with 
surgery coefficient $r_i, r'_j$ for $l_i, l'_j$ ($0 \leq i,j \leq 5$). 

\begin{claim}
The manifold $M_{\boldsymbol{\varepsilon},n}$ is homeomorphic to 
$L(0, 0, 0, 0, 0, 0, 0, 
\varepsilon_1, \varepsilon_2, \varepsilon_3, \varepsilon_4, 1/n)$. 
\end{claim}

\begin{proof}
First we consider the sublink 
$L_0 = l_0 \cup l'_0 \cup l_1 \cup l_2 \cup l_3 \cup l_4 \cup l_5$. 
Then it is easily seen that $L_0 (0, 0, 0, 0, 0, 0, 0)$ is 
homeomorphic to the product bundle $F_2 \times S^1$. 
See \cite{I} for example. 

The remaining components $l'_1, l'_2, l'_3, l'_4, l'_5$ 
are regarded as in the surgered manifold. 
These can be isotoped to lie on surface fibers, and 
we can assume that 
$l'_k$ is projected to $C_k$ on $F_2$ 
by the natural projection $F_2 \times S^1 \to F_2$. 

Now we regard $F_2 \times S^1$ as 
a surface bundle $M_{id.}$ with trivial monodromy. 
Remark that the preferred longitudes of 
the components $l'_1, l'_2, l'_3, l'_4, l'_5$ in $S^3$ 
are coincident with the longitudes induced by 
the bundle structure of $F_2 \times S^1$. 
Then it is known that 
the $1/n$-surgery on such a component $l'_k$ 
yields the manifold $M_{{D_k}^n}$ (see \cite{St, I}). 

Note that the components $l'_1, l'_3, l'_5$ are regarded as 
lying on the same fiber $F_2 \times \{ 1/3 \}$ and 
also $l'_2, l'_4$ as lying on $F_2 \times \{ 2/3 \}$. 
Thus 
$L(0, 0, 0, 0, 0, 0, 0, 
\varepsilon_1, \varepsilon_2, \varepsilon_3, \varepsilon_4, 1/n)$ 
is homeomorphic to 
$M_{{D_1}^{\varepsilon_1} \circ {D_3}^{\varepsilon_3} \circ 
{D_5}^n \circ {D_2}^{\varepsilon_2} \circ {D_4}^{\varepsilon_4} }$. 
We note that
\begin{eqnarray*}
{D_1}^{\varepsilon_1} \circ {D_3}^{\varepsilon_3} \circ 
{D_5}^n \circ {D_2}^{\varepsilon_2} \circ {D_4}^{\varepsilon_4} 
& \sim &
{D_1}^{\varepsilon_1} \circ {D_3}^{\varepsilon_3} \circ 
{D_5}^n \circ {D_4}^{\varepsilon_4} \circ {D_2}^{\varepsilon_2} \\
&\sim&
{D_2}^{\varepsilon_2} \circ {D_1}^{\varepsilon_1} \circ {D_3}^{\varepsilon_3} \circ 
{D_5}^n \circ {D_4}^{\varepsilon_4} \\
&\sim&
{D_5}^n \circ {D_2}^{\varepsilon_2} \circ {D_1}^{\varepsilon_1} \circ {D_3}^{\varepsilon_3} \circ {D_4}^{\varepsilon_4} \\
&\sim&
{D_2}^{\varepsilon_2} \circ 
{D_1}^{\varepsilon_1} \circ {D_3}^{\varepsilon_3} \circ 
{D_4}^{\varepsilon_4} \circ {D_5}^n
\sim f_{\boldsymbol{\varepsilon},n}, 
\end{eqnarray*}
where $\sim$ denotes suitable isotopy or conjugates. 
Thus 
$M_{{D_1}^{\varepsilon_1} \circ {D_3}^{\varepsilon_3} \circ 
{D_5}^n \circ {D_2}^{\varepsilon_2} \circ {D_4}^{\varepsilon_4} }$ 
is homeomorphic to our manifold $M_{\boldsymbol{\varepsilon},n}$. 

\end{proof}

By using the well-known modifications (see \cite{R} for example), 
we can simplify the surgery description obtained above to 
the one depicted in Figure \ref{fig4}. 

\unitlength=1.3mm
\begin{figure}[ht]
 \begin{picture}(50,45)
		\newsavebox{\uplink}	
		\savebox{\uplink}(5,25) 
		{ 
		\begin{picture}(5,25)
			\qbezier (0,20.5)(2.5,25)(5,20.5)
			\qbezier (0,4.5)(2.5,0)(5,4.5)
			\put (5,4.5){\line(0,1){16}}
			\put (0,5.5){\line(0,1){9}}
			\put (0,15.5){\line(0,1){4}}
		\end{picture} 
		} 
		\newsavebox{\lowlink}		
		\savebox{\lowlink}(5,20) 
		{ 
		\begin{picture}(5,20)
			\qbezier (0,15.5)(2.5,20)(5,15.5)
			\qbezier (0,4.5)(2.5,0)(5,4.5)
			\put (5,4.5){\line(0,1){11}}
			\put (0,5.5){\line(0,1){9}}
		\end{picture} 
		} 
	\put (5,40){\oval(10,10)[l]}	
	\put (5,45){\line(1,0){40}}
	\put (45,40){\oval(10,10)[r]}
	\put (5,35){\line(1,0){9.5}}
	\put (15.5,35){\line(1,0){19}}
	\put (35.5,35){\line(1,0){9.5}}
	\put (51,40){$1/n$}
	\multiput(10,15)(20,0){2}{\usebox{\uplink}}
	\put (15,40){$-\varepsilon_1$}
	\put (35,40){$-\varepsilon_3$}
	\multiput(20,5)(20,0){2}{\usebox{\lowlink}}
	\put (15,4){$-\varepsilon_2$}
	\put (35,4){$-\varepsilon_4$}
	\put (5,15){\oval(10,30)[l]}		
	\put (5,30){\line(1,0){9.5}}
	\put (15.5,30){\line(1,0){19}}
	\put (35.5,30){\line(1,0){10}}
	\multiput(46,25)(0,-20){2}{\oval(10,10)[r]}
	\multiput(44.5,20)(-10,0){3}{\line(-1,0){9}}
	\put (14.5,20){\line(-1,0){4.5}}
	\put (10,15){\oval(10,10)[l]}
	\put (10,10){\line(1,0){14.5}}
	\put (25.5,10){\line(1,0){19}}
	\put (46,0){\line(-1,0){41}}
	\put (52,25){$0$}
 \end{picture}
 \caption{}\label{fig4}
\end{figure}

To find a Heegaard surface, 
we perform surgeries on the four components 
with surgery coefficients $ - \varepsilon_i $ ($1 \leq i \leq 4$) 
to obtain a two component link. 
For example, the link corresponding to 
the case 
$(\varepsilon_1, \varepsilon_2, \varepsilon_3, \varepsilon_4)
=(-1, 1,-1,-1)$ is illustrated in Figure \ref{fig5}. 

\unitlength=1.3mm
\begin{figure}[ht]
 \begin{picture}(90,25)
		\newsavebox{\crp}	
		\savebox{\crp}(5,5) 
		{ 
		\begin{picture}(5,5)
			\put (0,0){\line(1,1){5}} 
			\multiput(0,5)(3, -3){2}{\line(1,-1){2}} 
		\end{picture}
		} 
		\newsavebox{\crm}	
		\savebox{\crm}(5,5) 
		{ 
		\begin{picture}(5,5)
			\put (0,5){\line(1,-1){5}} 
			\multiput(0,0)(3, 3){2}{\line(1,1){2}} 
		\end{picture}
		} 
		\newsavebox{\delt}	
		\savebox{\delt}(15,10) 
		{ 
		\begin{picture}(15,10)
			\multiput(0,0)(10, 0){2}{\usebox{\crp}} 
			\multiput(0,10)(10, 0){2}{\line(1,0){5}}
			\put (5,0){\line(1,0){5}} 
			\put (5,5){\usebox{\crp}} 
		\end{picture}
		} 
		\newsavebox{\doubledelta}	
		\savebox{\doubledelta}(30,10) 
		{ 
		\begin{picture}(30,10)
			\multiput(0,0)(15, 0){2}{\usebox{\delt}} 
		\end{picture}
		} 
	\multiput(5,0)(0,25){2}{\line(1,0){80}}
	\multiput(5,5)(40,0){2}{\line(1,0){30}}	\multiput(35,15)(40,0){2}{\line(1,0){10}}
	\multiput(35,20)(40,0){2}{\line(1,0){10}}
	\multiput(5,10)(40,0){2}{\usebox{\doubledelta}}
	\multiput(75,5)(5,0){2}{\usebox{\crp}}
	\multiput(35,5)(5,0){2}{\usebox{\crm}}
	\put (5,7.5){\oval(4,5)[l]}
	\put (5,7.5){\oval(10,15)[l]}
	\put (5,22.5){\oval(10,5)[l]}
	\multiput(85,2.5)(0,10){3}{\oval(10,5)[r]}
	\linethickness{0.3mm}
	\put (45,5){\line(0,1){5}}
 \end{picture}
 \caption{}\label{fig5}
\end{figure}

Consider the boundary of the regular neighborhood of 
the non-trivial component of the link 
together with the thickened arc illustrated in Figure \ref{fig5}. 
Then, by manipulating the figure, 
one can check that the surface 
separates the link complement into 
two homeomorphic compression bodies. 
This implies the surface becomes 
a genus two Heegaard surface of the surgered manifold, 
i.e., of $M_{\boldsymbol{\varepsilon},n}$. 
Thus the Heegaard genus of $M_{\boldsymbol{\varepsilon},n}$ is at most two. 

On the other hand, since our manifold $M_{\boldsymbol{\varepsilon},n}$ 
fibers over the circle with genus two fiber, 
its fundamental group contains a non-abelian surface subgroup. 
This implies that $M_{\boldsymbol{\varepsilon},n}$ fails to 
be $S^3$, $S^2 \times S^1$, lens spaces; 
in particular, it fails to have Heegaard genus one. 
Therefore the Heegaard genus of $M_{\boldsymbol{\varepsilon},n}$ 
is shown to be two. 
\end{proof}

This completes the proof of Theorem \ref{thm1}. 

As we stated in Section 1, 
most of our manifolds $\{ M_{\boldsymbol{\varepsilon},n} \}$ are 
shown to be hyperbolic. 

\begin{proposition}
With the same notations as in Theorem $\ref{thm1}$, 
\begin{enumerate}
\item
for $n=0$, 
the manifolds $\{ M_{\boldsymbol{\varepsilon},0} \}$ are hyperbolic unless $\varepsilon_1 =\varepsilon_2 =\varepsilon_3 =\varepsilon_4$, 
\item
for each $\boldsymbol{\varepsilon}$, 
the manifolds $\{ M_{\boldsymbol{\varepsilon},n} \}$ are hyperbolic 
with at most five exceptions. 
\end{enumerate}
\end{proposition}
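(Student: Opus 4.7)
The plan is to apply Thurston's hyperbolization theorem for surface bundles: the genus-two bundle $M_{\boldsymbol{\varepsilon},n}$ is hyperbolic precisely when the monodromy $f_{\boldsymbol{\varepsilon},n}$ is pseudo-Anosov, so it suffices to rule out the periodic and reducible cases. Equivalently, I can work from the Dehn surgery presentation constructed in the proof of Lemma \ref{lem2} and apply Thurston's hyperbolic Dehn surgery theorem to the appropriate cusped ambient manifold.

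For part (1), when $n=0$ the component carrying slope $1/n$ becomes $\infty$-surgery and can be removed, so Figure \ref{fig4} reduces to $0$-surgery on a two-bridge knot $K_{\boldsymbol{\varepsilon}}$ whose $4$-plat is read off directly from the signs $\varepsilon_i$. First I would identify $K_{\boldsymbol{\varepsilon}}$ via its continued fraction $[\varepsilon_1,\varepsilon_2,\varepsilon_3,\varepsilon_4]$, verifying by case analysis that $K_{\boldsymbol{\varepsilon}}$ is a $(2,q)$-torus knot exactly when $\varepsilon_1=\varepsilon_2=\varepsilon_3=\varepsilon_4$ and is a hyperbolic two-bridge knot otherwise. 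Then I would invoke the classification of exceptional surgeries on two-bridge knots (Brittenham--Wu): no hyperbolic two-bridge knot admits a non-hyperbolic longitudinal filling, so $0$-surgery on $K_{\boldsymbol{\varepsilon}}$ is hyperbolic in all the asserted cases.

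For part (2), fix $\boldsymbol{\varepsilon}$ and view the family $\{M_{\boldsymbol{\varepsilon},n}\}$ as the $1/n$-Dehn fillings of the cusped manifold $N_{\boldsymbol{\varepsilon}}$ obtained by performing the eleven fixed surgeries and leaving the last component unfilled. First verify that $N_{\boldsymbol{\varepsilon}}$ is hyperbolic: when the signs $\varepsilon_i$ do not all agree this follows immediately from part (1), since $M_{\boldsymbol{\varepsilon},0}$ is a hyperbolic filling of $N_{\boldsymbol{\varepsilon}}$; in the two remaining cases (all $+1$ and all $-1$) I would verify hyperbolicity directly, either by exhibiting a pseudo-Anosov filling via Penner's construction, using that the curve systems $\{C_1,C_3,C_5\}$ and $\{C_2,C_4\}$ fill $F_2$, or by a SnapPy computation. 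Thurston's hyperbolic Dehn surgery theorem then gives that all but finitely many $1/n$-fillings are hyperbolic, and the sharp bound of at most five exceptions can be extracted either from the $6$-theorem of Agol and Lackenby applied to a maximal horocusp in $N_{\boldsymbol{\varepsilon}}$ or, in the spirit of the paper, by direct computer verification for each of the sixteen choices of $\boldsymbol{\varepsilon}$.

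The main obstacle is establishing hyperbolicity of $N_{\boldsymbol{\varepsilon}}$ in the constant-sign cases, since the surgery argument that bootstraps off part (1) breaks down there; this is also where the naive combinatorial attempts (Penner's positivity, Thurston's two-filling criterion) do not apply out of the box to $f_{\boldsymbol{\varepsilon},n}$ itself, and a slightly more delicate train-track or rigorous numerical argument is needed.
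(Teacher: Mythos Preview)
For part~(1) your outline matches the paper's: recognize $M_{\boldsymbol{\varepsilon},0}$ as $0$-surgery on a two-bridge knot (the paper gives the Conway form $[2\varepsilon_4,-2\varepsilon_3,2\varepsilon_2,-2\varepsilon_1]$, not $[\varepsilon_1,\varepsilon_2,\varepsilon_3,\varepsilon_4]$) and invoke Brittenham--Wu. One correction: it is not true that hyperbolic two-bridge knots never have a non-hyperbolic longitudinal filling---$0$-surgery on the figure-eight knot is a Sol torus bundle. The paper instead uses that these particular knots have genus two, and Brittenham--Wu then rules out a non-hyperbolic $0$-filling unless the knot is a torus knot, which occurs only when all $\varepsilon_i$ agree.

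For part~(2) the paper takes a different and more uniform route. Rather than proving the cusped manifold $N_{\boldsymbol{\varepsilon}}$ is hyperbolic and then appealing to the $6$-theorem or computer, the paper observes that $M_{\boldsymbol{\varepsilon},0}$ is \emph{atoroidal} for every $\boldsymbol{\varepsilon}$ (again by Brittenham--Wu; this holds even in the constant-sign case, since $0$-surgery on the $(2,5)$-torus knot is a small Seifert fibered space). Atoroidality of the bundle forces the monodromy $f_{\boldsymbol{\varepsilon},0}$ to be irreducible, so the pair $(f_{\boldsymbol{\varepsilon},0},C_5)$ fills $F_2$, and then Theorem~2.6 of Boyer--Gordon--Zhang yields the bound of five exceptions directly. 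This sidesteps exactly the obstacle you flagged: no separate argument is needed in the constant-sign cases, and no horocusp estimate or SnapPy check is required to extract the constant five. By contrast, your route via the $6$-theorem would give only ``finitely many'' exceptions without further geometric input specific to each $\boldsymbol{\varepsilon}$, so the sharp bound in the statement would remain unproved.
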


\begin{proof}
We can see directly from Figure \ref{fig4} that 
$M_{\boldsymbol{\varepsilon},0}$ is obtained by $0$-surgery 
along the two-bridge knot with Conway's normal form 
$[2\varepsilon_4, -2\varepsilon_3, 2\varepsilon_2, -2\varepsilon_1]$. 
Then, by the classification of 
exceptional surgeries on two-bridge knots \cite{BrWu}, 
we see that $M_{\boldsymbol{\varepsilon},0}$ is hyperbolic other than 
the corresponding knot is of genus one or a torus knot. 
All our knots are of genus two, and so, 
the knot we have to exclude is the (2,5)-torus knot. 
This corresponds to the case where 
$\varepsilon_1 =\varepsilon_2 =\varepsilon_3 =\varepsilon_4$. 

Next, note that the homeomorphism 
$f_{\boldsymbol{\varepsilon},0}={D_2}^{\varepsilon_2} \circ 
{D_1}^{\varepsilon_1} \circ {D_3}^{\varepsilon_3} \circ {D_4}^{\varepsilon_4}$ 
of $F_2$ is irreducible. 
Because, as we claimed above, 
each manifold $M_{\boldsymbol{\varepsilon},0}$ is 
obtained by 0-surgery along a two-bridge knot of genus two, 
but such a $3$-manifold is always atoroidal by \cite{BrWu}. 
Thus $({D_2}^{\varepsilon_2} \circ 
{D_1}^{\varepsilon_1} \circ {D_3}^{\varepsilon_3} \circ {D_4}^{\varepsilon_4}, C_5)$ 
fills $F_2$ up, and so, we can apply \cite[Theorem 2.6]{BGZ}. 
This concludes the second assertion of the proposition. 
\end{proof}

\section{Fibered knots}

As a byproduct of Lemma \ref{lem1} and Lemma \ref{lem2}, 
we obtain an infinite family of hyperbolic knots 
in the $3$-sphere $S^3$ whose knot groups are of rank two. 

\begin{proposition}\label{prop-knots}
Let $n$ be an arbitrary integer and 
$\varepsilon_i$ either $+1$ or $-1$ for $i = 1,2,4$ and 
$\varepsilon_3 = -\varepsilon_1$. 
Then the Montesinos knot 
$M \left( \frac{1}{2 \varepsilon_1}, 
\frac{2 \varepsilon_2}{4 \varepsilon_2 n -1}, 
\frac{2 \varepsilon_4}{4 \varepsilon_3 \varepsilon_4 -1} \right)$ 
is always hyperbolic, genus two, fibered knots in $S^3$ 
with rank two knot group. 
\end{proposition}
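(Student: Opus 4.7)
The plan is to prove the proposition by identifying each manifold $M_{\boldsymbol{\varepsilon},n}$ (under the hypothesis $\varepsilon_3 = -\varepsilon_1$) with $0$-surgery on the stated Montesinos knot $K$ in $S^3$, and then transferring the knot-theoretic properties from those already established for $M_{\boldsymbol{\varepsilon},n}$.

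Starting from the surgery description of $M_{\boldsymbol{\varepsilon},n}$ in Figure \ref{fig4}, I would perform Rolfsen blow-downs on the five auxiliary components, whose framings are $-\varepsilon_1, -\varepsilon_2, -\varepsilon_3, -\varepsilon_4$ and $1/n$. Each blow-down inserts a rational tangle into the strands it encircles, computed by the usual continued-fraction rule. A direct tangle-by-tangle computation shows that the four $\pm 1$-framings, once paired and reduced using $\varepsilon_3 = -\varepsilon_1$, yield rational tangles of slopes $\tfrac{1}{2\varepsilon_1}$ and $\tfrac{2\varepsilon_4}{4\varepsilon_3\varepsilon_4-1}$, while the $1/n$-framed unknot contributes the third tangle of slope $\tfrac{2\varepsilon_2}{4\varepsilon_2 n - 1}$. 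Assembling the three tangles identifies the resulting $0$-framed knot in $S^3$ as the Montesinos knot $K$ in the statement. The hypothesis $\varepsilon_3 = -\varepsilon_1$ plays a double role: it makes the combined tangle arithmetic come out as stated, and it is exactly what is needed for $H_1(M_{\boldsymbol{\varepsilon},n}) \cong \mathbb{Z}$, so that $M_{\boldsymbol{\varepsilon},n}$ arises as $0$-surgery on some knot in $S^3$ at all.

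Once $M_{\boldsymbol{\varepsilon},n} = M_0(K)$ is established, the four assertions about $K$ all follow. For \emph{fiberedness and genus two}: the genus-two surface bundle structure on $M_0(K)$ produced by Theorem \ref{thm1} can be arranged transverse to the core of the surgery solid torus, so $K$ is a fibered knot with once-punctured genus-two fiber. For \emph{rank two of the knot group}: $\pi_1(S^3 - K)$ has the presentation $\langle a_1, a_2, a_3, a_4, t \mid t^{-1} a_i t = \Phi_{\boldsymbol{\varepsilon},n}(a_i),\, 1 \leq i \leq 4 \rangle$, which is (\ref{prn1}) with the closed-surface relation $[a_1,a_2]=[a_3,a_4]$ deleted (the fiber of $K$ has boundary). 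The generator-elimination in the proof of Lemma \ref{lem1} then applies word-for-word, giving a $2$-generator presentation on $\{a_1, t\}$; the matching lower bound is automatic, since a nontrivial knot group in $S^3$ is never cyclic. For \emph{hyperbolicity}: when $n \neq 0$ the standard classification of Montesinos knots applies, as the three tangle denominators $\{2,\ |4n\pm 1|,\ 3\text{ or }5\}$ never form one of the finitely many exceptional triples giving torus knots or small Seifert-fibered exceptions; when $n = 0$, $K$ degenerates to a two-bridge knot of genus two, and the only genus-two two-bridge torus knot is the $(2,5)$-torus (corresponding precisely to $\varepsilon_1=\varepsilon_2=\varepsilon_3=\varepsilon_4$), excluded by our hypothesis $\varepsilon_3 = -\varepsilon_1$.

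The main obstacle is the Kirby-calculus step: carefully tracking framings, orientations, and continued-fraction reductions through the five blow-downs to confirm that the resulting knot is exactly the Montesinos knot in the statement. Every remaining step is either a routine transfer from the surface bundle structure of $M_{\boldsymbol{\varepsilon},n}$ or a verbatim repetition of Lemma \ref{lem1}.
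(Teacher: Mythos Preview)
Your overall strategy is essentially the paper's: identify the surface bundle with (the complement of) the stated Montesinos knot via Kirby calculus on the surgery picture, and reuse the generator-elimination of Lemma~\ref{lem1} verbatim for the rank. The differences lie in the sub-arguments.

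The paper works directly with the \emph{once-punctured} bundle $M'_{\boldsymbol{\varepsilon},n}$, leaving $l_0$ unfilled in Figure~\ref{fig3}, and simplifies under $\varepsilon_3=-\varepsilon_1$ to Figure~\ref{fig6} before blowing down. You instead pass through the closed manifold and the equality $M_{\boldsymbol{\varepsilon},n}=M_0(K)$. Both routes land on the same Montesinos knot, but yours forces the extra step of arguing that the surgery core can be made transverse to the fibration; the paper sidesteps this entirely, and in fact does not deduce fiberedness from the bundle structure at all. Instead it builds the Seifert surface from the explicit diagram by Seifert's algorithm, deplumbs two Hopf bands, and reduces the rest to a connected sum of Hopf bands by Stallings twists, thereby exhibiting a genus-two fiber surface directly.

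For hyperbolicity the two arguments are genuinely different. The paper reads off from the diagram that $K$ has unknotting number one and bridge number at most three; Kronheimer--Mrowka then excludes torus knots (no genus-two torus knot has unknotting number one), while Schubert plus Scharlemann exclude satellites (a three-bridge satellite would be composite, but unknotting-number-one knots are prime). Your appeal to the Bonahon--Siebenmann/Oertel classification is a legitimate alternative and arguably more systematic, though it trades the paper's diagrammatic observations for checking a finite exceptional list.

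One small correction: the claim that $\varepsilon_3=-\varepsilon_1$ is ``exactly what is needed'' for $H_1(M_{\boldsymbol{\varepsilon},n})\cong\mathbb{Z}$ is not right --- the paper already notes that every $M_{\boldsymbol{\varepsilon},0}$ is $0$-surgery on a two-bridge knot regardless of the signs. In the paper the hypothesis $\varepsilon_3=-\varepsilon_1$ is used only to make the surgery picture collapse to a three-tangle Montesinos form.
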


\begin{proof}
We consider the once-punctured surface bundles $M'_{\boldsymbol{\varepsilon},n}$ 
defined in the same way as the manifold $M_{\boldsymbol{\varepsilon},n}$ 
in Theorem \ref{thm1}. 
Then its fundamental group is shown to be of rank two 
in the same way as proving Lemma \ref{lem1}. 
As we explained in the proof of Lemma \ref{lem2}, 
the manifold $M'_{\boldsymbol{\varepsilon},n}$ has obtained by surgery 
on the link depicted in Figure \ref{fig3}. 
The surgery coefficients are the same as 
for $M_{\boldsymbol{\varepsilon},n}$ except for that on $l_0$. 
On $l_0$, no filling is performed, i.e., 
it corresponds to a toral boundary. 
This surgery description can be simplified, 
under the assumption that $\varepsilon_3 = -\varepsilon_1$, 
to the one illustrated in Figure \ref{fig6}. 

\unitlength=1mm
\begin{figure}[ht]
 \begin{picture}(55,40)
		\newsavebox{\crpn}	
		\savebox{\crpn}(5,5) 
		{ 
		\begin{picture}(5,5)
			\put (0,0){\line(1,1){5}} 
			\multiput(0,5)(3, -3){2}{\line(1,-1){2}} 
		\end{picture}
		} 
		\newsavebox{\crmn}	
		\savebox{\crmn}(5,5) 
		{ 
		\begin{picture}(5,5)
			\put (0,5){\line(1,-1){5}} 
			\multiput(0,0)(3, 3){2}{\line(1,1){2}} 
		\end{picture}
		} 
		\newsavebox{\aug}	
		\savebox{\aug}(15,10) 
		{ 
		\begin{picture}(15,10)
			\multiput(0,0)(0, 5){2}{\usebox{\crmn}} 
			\multiput(10,0)(0, 5){2}{\usebox{\crpn}} 
			\multiput(5,0)(0, 10){2}{\line(1,0){5}}
		\end{picture}
		} 
		\newsavebox{\doubleaug}	
		\savebox{\doubleaug}(15,30) 
		{ 
		\begin{picture}(15,30)
			\multiput(0,0)(10,0){2}{\usebox{\crpn}}
			\multiput(5,0)(0,5){2}{\line(1,0){5}}
			\multiput(0,5)(15,0){2}{\line(0,1){5}}
			\multiput(0,10)(10,0){2}{\usebox{\crmn}}
			\multiput(5,10)(0,5){2}{\line(1,0){5}}
			\multiput(0,15)(15,0){2}{\line(0,1){5}}
			\put (0,20){\usebox{\aug}} 
		\end{picture}
		} 
	\multiput(0,35)(55,0){2}{\line(0,1){5}}
	\multiput(0,0)(55,0){2}{\line(0,1){5}}
	\multiput(0,0)(0,40){2}{\line(1,0){55}}
	\multiput(0,5)(0,20){2}{\line(0,1){10}}
	\multiput(15,5)(0,20){2}{\line(0,1){10}}
	\multiput(15,5)(0,30){2}{\line(1,0){5}}
	\put (0,15){\usebox{\aug}}
	\multiput(35,5)(0,30){2}{\line(1,0){5}}
	\multiput(20,5)(20,0){2}{\usebox{\doubleaug}}
	\put (-6,20){$- \varepsilon_1$}
	\put (25,2.5){$- \varepsilon_2$}
	\put (55.5,30){$- \varepsilon_3 = \varepsilon_1$}
	\put (55.5,12){$- \varepsilon_4$}
	\put (25,36){$1/n$}
 \end{picture}
 \caption{}\label{fig6}
\end{figure}

By performing surgeries, we obtain 
the Montesinos knot 
$M \left( \frac{1}{2 \varepsilon_1}, 
\frac{2 \varepsilon_2}{4 \varepsilon_2 n -1}, 
\frac{2 \varepsilon_4}{4 \varepsilon_3 \varepsilon_4 -1} \right)$ 
from the unlabeled component. 

\unitlength=1.2mm
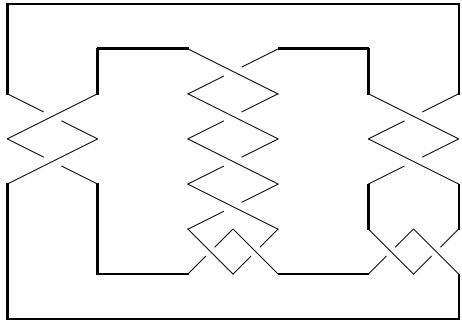
\begin{figure}[ht]
 \begin{picture}(50,35)
		\newsavebox{\ptwist}	
		\savebox{\ptwist}(10,10) 
		{ 
		\begin{picture}(10,10)
			\multiput(0,0)(0,5){2}{\line(2,1){10}} 
			\multiput(10,0)(-6, 3){2}{\line(-2,1){4}} 
			\multiput(10,5)(-6, 3){2}{\line(-2,1){4}} 
		\end{picture}
		} 
		\newsavebox{\mtwist}	
		\savebox{\mtwist}(10,10) 
		{ 
		\begin{picture}(10,10)
			\multiput(10,0)(0,5){2}{\line(-2,1){10}} 
			\multiput(0,0)(6, 3){2}{\line(2,1){4}} 
			\multiput(0,5)(6, 3){2}{\line(2,1){4}} 
		\end{picture}
		} 
		\newsavebox{\hook}	
		\savebox{\hook}(10,5) 
		{ 
		\begin{picture}(10,5)
			\multiput(10,0)(-5,0){2}{\line(-1,1){5}} 
			\multiput(0,0)(3, 3){2}{\line(1,1){2}} 
			\multiput(5,0)(3, 3){2}{\line(1,1){2}} 
		\end{picture}
		} 
	\multiput(0,30)(50,0){2}{\line(0,1){5}}
	\multiput(0,0)(50,0){2}{\line(0,1){5}}
	\multiput(0,0)(0,35){2}{\line(1,0){50}}
	\multiput(10,5)(0,25){2}{\line(1,0){10}}
	\multiput(30,5)(0,25){2}{\line(1,0){10}}
	\put (0,15){\usebox{\ptwist}}
	\multiput(0,5)(10,0){2}{\line(0,1){10}}
	\multiput(0,25)(10,0){2}{\line(0,1){5}}
	\put (40,15){\usebox{\mtwist}}
	\multiput(20,10)(0,10){2}{\usebox{\mtwist}}
	\multiput(20,5)(20,0){2}{\usebox{\hook}}
	\multiput(40,10)(10,0){2}{\line(0,1){5}}
	\multiput(40,25)(10,0){2}{\line(0,1){5}}
 \end{picture}
 \caption{
	$( \varepsilon_1, \varepsilon_2, \varepsilon_3, \varepsilon_4, n ) 
	 = (1, 1, -1, 1, 2)$ }
 \label{fig7}
\end{figure}

Consequently the knot groups of our knots, 
which are isomorphic to 
the fundamental groups of $M'_{\boldsymbol{\varepsilon},n}$, 
are shown to be of rank two. 

The fact that the knots are of genus two and fibered 
can be checked directly. 
Consider the diagram of the knot 
which is naturally obtained from Figure \ref{fig6} 
by performing surgeries 
(an example is depicted in Figure \ref{fig7}). 
Let $S$ be the Seifert surface obtained 
from this diagram by Seifert's algorithm. 
One can see that 
two Hopf bands are deplumbed from $S$, and 
the surface so obtained is modified 
to the connected sum of two Hopf bands by Stallings twists. 
This shows that $S$ is a fiber surface. 
See \cite{Ha} for example. 
Since $S$ is of genus two and 
any fiber surface is minimal genus 
(see \cite{BuZ} for example), 
all our knots are of genus two. 

While it is known which Montesinos knots are hyperbolic \cite{BS, Oe}, 
here we show that our knots are hyperbolic directly. 
First note that our knots are all unknotting number one 
as the diagram considered above shows. 
By \cite{KM}, 
no torus knot of genus two has unknotting number one, 
and so, none of our knots are torus knots. 
Also note that our knots are all at most three bridge 
as the diagram shows. 
This implies that if our knots are satellite, 
then they must be composite (\cite{Schu}), 
but it is impossible by \cite{Scha2}. 
\end{proof}

In fact, they are all \textit{tunnel number one knots}, 
meaning that, the exteriors admit a genus two Heegaard surface. 
Their unknotting tunnels place 
in the corresponding position to 
the thickened arc illustrated in Figure \ref{fig5}. 

\section{Computer Experiments}

In this section, we show some examples of $3$-manifolds with $2$-generator
fundamental groups which are found by computer experiments.
All examples are surface bundles over the circle with genus two surface fiber.

Recall that the Dehn twists $D_1, D_2, \cdots, D_5$ generate the mapping class group
of a closed surface of genus two.
We fix this generating system and describe the monodromy of surface bundles
in terms of the word of these generators.

In the first experiment, we compute the rank of the fundamental
group for all closed/once-punctured surface bundles with monodromy up to word length 5.
There is a table of the representative elements of 
all conjugacy classes of monodromies up to word length 4 in \cite{AT}.
As an extension of the result, 
we can obtain such a list up to length 5, 
which is now available at 
\texttt{http://www.is.titech.ac.jp/\~{}takasawa/MCG/}.
In fact, there are 172 nontrivial representative elements. 
To compute the rank of the fundamental group of the manifold, 
we implement the algorithm in \cite{I}, which constructs a surgery description
of the surface bundle from the monodromy.
Then we use SnapPea \cite{SnapPea} to compute the fundamental group of
the manifold.
Note that in general, it is difficult to determine the rank of the fundamental group.
However, in this case, 
if there is a presentation which has just 2-generator, 
we can say the rank is 2.
The result of the experiment is shown in Table \ref{Table1}.
There are 30 distinct examples which have $2$-generator fundamental groups.
For some examples 
the first Betti number $\beta_1$ equals to $2$,
however they are
torus sums of two Seifert manifolds.

\begin{table}[hbt]
\caption{}\label{Table1}
\begin{tabular}{|l|l|l|l|l|l|l|}
\hline
		&	\multicolumn{3}{c|}{closed} & \multicolumn{3}{c|}{once-punctured} \\
monodromy	&	volume &	rank of $\pi_1$	& $\beta_1$  &
volume &	rank of $\pi_1$	&	$\beta_1$  \\
\hline
$  D_1  D_2  D_3   $ & 0.000002 & 2 & 2 & 0.000077 & 2 & 2\\
$  D_1  D_2  D_3^{-1}   $ & 3.663862 & 2 & 2 & 3.663872 & 2 & 2\\
$  D_1  D_2^{-1}  D_3   $ & 5.333490 & 2 & 2 & 5.333605 & 2 & 2\\
$  D_1  D_2  D_3  D_4   $ & 0.000000 & 2 & 1 & 0.000000 & 2 & 1\\
$  D_1  D_2  D_3  D_4^{-1}   $ & 4.400833 & 2 & 1 & 3.770830 & 2 & 1\\
$  D_1  D_2  D_3^{-1}  D_4^{-1}   $ & 5.693021 & 2 & 1 & 4.059766 & 2 & 1\\
$  D_1  D_2^{-1}  D_3  D_4   $ & 7.084926 & 2 & 1 & 6.180274 & 2 & 1\\
$  D_1  D_2^{-1}  D_3  D_4^{-1}   $ & 8.935857 & 2 & 1 & 7.646593 & 2 & 1\\
$  D_1  D_2^{-1}  D_3^{-1}  D_4   $ & 7.643375 & 2 & 1 & 6.332667 & 2 & 1\\
$  D_1^2  D_2  D_3  D_4   $ & 0.000000 & 2 & 1 & 0.000000 & 2 & 1\\
$  D_1^2  D_2  D_3  D_4^{-1}   $ & 4.921483 & 2 & 1 & 3.970290 & 2 & 1\\
$  D_1^2  D_2  D_3^{-1}  D_4   $ & 7.967261 & 3 & 1 & 6.788090 & 2 & 1\\
$  D_1^2  D_2  D_3^{-1}  D_4^{-1}   $ & 6.577943 & 2 & 1 & 4.765940 & 2 & 1\\
$  D_1^2  D_2^{-1}  D_3  D_4   $ & 8.118328 & 2 & 1 & 7.023949 & 2 & 1\\
$  D_1^2  D_2^{-1}  D_3  D_4^{-1}   $ & 9.951958 & 2 & 1 & 8.550620 & 2 & 1\\
$  D_1^2  D_2^{-1}  D_3^{-1}  D_4   $ & 8.554563 & 2 & 1 & 7.180680 & 2 & 1\\
$  D_1^2  D_2^{-1}  D_3^{-1}  D_4^{-1}   $ & 5.231154 & 2 & 1 & 4.725229 & 2 & 1\\
$  D_1  D_2  D_1^{-1}  D_2  D_3^{-1}   $ & 5.333490 & 2 & 2 & 5.333490 & 2 & 2\\
$  D_1  D_2^2  D_3  D_4   $ & 4.464659 & 2 & 1 & 3.853456 & 2 & 1\\
$  D_1  D_2^2  D_3  D_4^{-1}   $ & 6.746042 & 3 & 1 & 5.137941 & 2 & 1\\
$  D_1  D_2  D_3  D_4  D_5   $ & 0.000000 & 2 & 1 & 0.000000 & 2 & 1\\
$  D_1  D_2  D_3  D_4  D_5^{-1}   $ & 4.056860 & 2 & 1 & 3.177293 & 2 & 1\\
$  D_1  D_2  D_3  D_4^{-1}  D_5   $ & 6.771750 & 2 & 1 & 5.563668 & 2 & 1\\
$  D_1  D_2  D_3  D_4^{-1}  D_5^{-1}   $ & 4.124903 & 2 & 1 & 0.790429 & 2 & 1\\
$  D_1  D_2  D_3^{-1}  D_4  D_5   $ & 7.746275 & 2 & 1 & 6.551743 & 2 & 1\\
$  D_1  D_2  D_3^{-1}  D_4  D_5^{-1}   $ & 8.602031 & 2 & 1 & 6.965760 & 2 & 1\\
$  D_1  D_2  D_3^{-1}  D_4^{-1}  D_5   $ & 7.406768 & 3 & 1 & 5.333490 & 2 & 1\\
$  D_1  D_2^{-1}  D_3  D_4  D_5^{-1}   $ & 9.250556 & 2 & 1 & 7.517690 & 3 & 1\\
$  D_1  D_2^{-1}  D_3  D_4^{-1}  D_5   $ & 10.649781 & 3 & 1 & 8.793346 & 2 & 1\\
$  D_1  D_2^{-1}  D_3^{-1}  D_4^{-1}  D_5   $ & 6.783714 & 2 & 1 & 5.333490 & 3 & 1\\
\hline
\end{tabular}
\end{table}

In the second experiment, we searched over 100,000 randomly generated 
words up to word length 20 and
we found huge number of examples of hyperbolic surface bundles 
with rank two fundamental groups. 
Unfortunately, for many of such manifolds, 
we have no practical algorithm to detect their Heegaard genus, 
and at present, 
we do not have examples of Heegaard genus more than two. 

Here we exhibit single example we found, 
which seems independently interesting. 
The closed surface bundle $M_f$ 
with monodromy $ f = D_1^2 D_3^3 D_5^{-1} D_4^{-1} D_3^{-1} D_2 $ 
has $2$-generator fundamental group and the first Betti number 
of $M_f$ is also $2$ and $M_f$ admits a hyperbolic structure. 
The fact that the first Betti number is two implies that 
it admits more than one surface bundle structures. 
Unfortunately, we could not determined the Heegaard genus of $M_f$.

\bibliographystyle{amsplain}

\end{document}